  \newtheorem{thm}{Theorem}
\numberwithin{thm}{section}
\title{Just infinite quotients of finitely generated subgroups of $\textup{PL}^+[0,1]$.}
\begin{document}
\author{Yash Lodha}

\date{\today}

\begin{abstract}
We show that just infinite quotients of finitely generated subgroups of Richard Thompson's group $\textup{F}$ are virtually abelian, answering a question of Grigorchuk. We show the same holds for the group of piecewise linear orientation preserving homeomorphisms of the interval, and the group of piecewise projective homeomorphisms of the real line. The latter provides a plethora of examples of nonamenable groups with this property. 
\end{abstract}
\maketitle

A group is said to be \emph{just infinite} if every nontrivial normal subgroup is of finite index.
A straightforward application of Zorn's lemma to the inclusion ordered poset of infinite index normal subgroups shows that every finitely generated infinite group admits a just infinite quotient.
Wilson showed that the study of just-infinite groups can be reduced to the study of simple groups, branch groups, and hereditarily just-infinite groups, i.e. groups all of whose finite index subgroups are just-infinite (see \cite{GrigorchukJI}, \cite{Wilson}).

The following definitions are due to Grigorchuk, who communicated these to the author in private correspondence.
A group is said to be \emph{elementarily just infinite} if every just infinite quotient is virtually abelian. 
A group is said to be \emph{locally elementarily just infinite}, or in the class (LEJI), if every finitely generated subgroup is elementarily just infinite.

Recall that $\textup{PL}^+[0,1]$ is the group of piecewise linear orientation preserving homeomorphisms of $[0,1]$. Thompson's group $\textup{F}\leq \textup{PL}^+[0,1]$ is the subgroup consisting of elements that satisfy that the slopes, whenever they exist, lie in $\{2^n\mid n\in \mathbf{Z}\}$ and whose breakpoints lie in $\mathbf{Z}[\frac{1}{2}]\cap [0,1]$. The group $\textup{H}(\mathbf{R})$ is the group of piecewise projective homeomorphisms of the real line defined in \cite{monod:pp1}, and the group $G_0$ is a finitely presented nonamenable group without free subgroups constructed by the author with Justin Moore in \cite{LodhaMoore} (shown to be of type $F_{\infty}$ in \cite{Lodha}), which emerges as a subgroup of $\textup{H}(\mathbf{R})$.
The goal of this note is to show the following:

\begin{thm}\label{main}
The groups $\textup{PL}^+[0,1]$ and $\textup{H}(\mathbf{R})$ lie in the class \emph{(LEJI)}.
\end{thm}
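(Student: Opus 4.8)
My plan is to detect the ``abelian part'' of $G$ through germ homomorphisms and to show that the remaining ``rich'' part must die in any just infinite quotient. Since $G\le\textup{PL}^+[0,1]$ is finitely generated, the set of non-fixed points $\bigcup_{g}\supp(g)$ is a finite union of open intervals, so $G$ has only finitely many orbitals $(a_1,b_1),\dots,(a_r,b_r)$. For a piecewise linear homeomorphism the germ at a fixed endpoint is recorded by a single slope, and a trivial slope forces the element to be the identity on a one-sided neighborhood; the same holds for piecewise projective maps, where the germ lies in the (solvable) stabilizer of the point in $PSL_2(\bR)$. I would therefore assemble all these germs into a single homomorphism $\Phi\colon G\to A$, where $A$ is a product of finitely many germ groups, and record two facts: $A$ is solvable (abelian in the $\textup{PL}$ case, metabelian in the projective case), and $K:=\ker\Phi$ consists exactly of the elements of $G$ whose support is compactly contained in the union of the orbitals.

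First I would treat the quotient structurally. Let $Q=G/N$ be just infinite. For each orbital I consider the group $\tilde K_i$ of restrictions to $(a_i,b_i)$ of elements of $K$, and its commutator subgroup $S_i=[\tilde K_i,\tilde K_i]$; pulling back, I get a subgroup of $G$, still called $S_i$, that is normal because $G$ preserves each orbital. The Key Lemma I need is that each $S_i$ is either trivial or an infinitely generated simple group. Granting this, the image $\overline{S_i}=S_iN/N$ is normal in $Q$; if it were nontrivial then, $S_i$ being simple, $\overline{S_i}\cong S_i$, and by just infiniteness $\overline{S_i}$ would have finite index in $Q$. But $Q$ is finitely generated, so its finite index subgroup $\overline{S_i}\cong S_i$ would itself be finitely generated, contradicting infinite generation. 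Hence $S_i\le N$ for every $i$.

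It then remains to show that $G/\langle S_1,\dots,S_r\rangle$ is solvable, for then $Q$, being a quotient of this group, is solvable; a just infinite solvable group is virtually abelian, since its last nontrivial derived term is an abelian characteristic subgroup, hence of finite index, which finishes the proof. Writing $S=\langle S_1,\dots,S_r\rangle$, the restriction map embeds $K$ into $\prod_i \tilde K_i$, whence $[K,K]$ maps into $\prod_i[\tilde K_i,\tilde K_i]$ and, once the identification discussed below is justified, lies in $S$, so that $K/S$ is abelian; combined with $G/K\hookrightarrow A$ solvable, this makes $G/S$ solvable. The same argument applies verbatim to $\textup{H}(\mathbf{R})$, the only change being that the germ target $A$ is merely solvable rather than abelian; in particular amenability is never used, which is why the nonamenable examples inside $\textup{H}(\mathbf{R})$ are still covered.

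The main obstacle is the Key Lemma: proving that the commutator subgroup of the compactly supported part on each orbital is simple and, when nontrivial, infinitely generated. Simplicity should follow from the classical ``locally moving group'' machinery (Higman--Epstein--Rubin) together with the Brin--Squier analysis of commuting conjugate supports in $\textup{PL}^+[0,1]$, while infinite generation should follow from exhibiting these groups as strictly increasing unions of the subgroups supported on compactly contained subintervals. The delicate point I expect to have to handle with care is the discrepancy between an element's restriction to a single orbital and an element actually supported on that orbital, i.e. verifying both that the per-orbital simple group genuinely pulls back to a normal subgroup of $G$ and that $[K,K]$ is captured by $S$; this is where the precise dynamical hypotheses on the action on each orbital will have to be pinned down.
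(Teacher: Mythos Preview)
Your plan is genuinely different from the paper's, and the two obstacles you flag are real obstructions, not mere details. The Key Lemma --- simplicity of $S_i=[\tilde K_i,\tilde K_i]$ --- is not available for an \emph{arbitrary} finitely generated $G\le\textup{PL}^+[0,1]$: the Epstein--Higman style results you invoke require a transitivity hypothesis (locally moving, CO-transitive, or similar) that the restriction $G_i$ of $G$ to a single orbital need not satisfy. The pullback issue is just as serious. If you extend elements of $S_i\le\Homeo(a_i,b_i)$ by the identity you need not land in $G$ (think of $G$ embedded diagonally in two orbitals, each copy conjugate to $F$: no nontrivial element of $G$ is supported in one orbital, yet $[\tilde K_i,\tilde K_i]\cong F'$ is huge). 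If instead you take the preimage of $S_i$ under $G\to G_i$, you get a normal subgroup of $G$ but it contains $\ker(G\to G_i)$ and is typically not simple, so the ``simple image has finite index, hence is finitely generated'' step collapses. Either reading breaks one of the two properties you need simultaneously.

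The paper avoids both problems with a short direct argument that uses only the elementary fact that a fixed-point-free orientation-preserving action on an interval moves every compact subset off itself. Given $\phi\colon H\twoheadrightarrow J$ with $J$ just infinite, choose finitely many $g_1,\dots,g_k\in H^{(1)}$ whose images generate $J^{(1)}$; since germs at orbital endpoints are abelian, the closure of $\supp G_0$ (with $G_0=\langle g_1,\dots,g_k\rangle$) lies inside the union of the open orbitals. On the leftmost orbital meeting $\supp G_0$, pick $k_0\in H$ pushing that compact piece of support off itself, and set $G_1=\langle[G_0^{k_0},G_0]\rangle$. Because $J^{(1)}$ is characteristic, $\phi(G_0)^{\phi(k_0)}=\phi(G_0)$ and hence $\phi(G_1)=J^{(2)}$, while $G_1$ is supported in strictly fewer orbitals than $G_0$. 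Iterating, after at most $n$ steps (with $n$ the number of orbitals) one reaches $G_l$ with $[G_l^{k_l},G_l]=1$, so $J^{(l+1)}$ is abelian and, being a nontrivial normal subgroup of the just infinite $J$, of finite index. No simplicity, no infinite-generation, and no pullback are needed; for $\textup{H}(\mathbf{R})$ one starts from $J^{(2)}$ instead of $J^{(1)}$ because the germs are metabelian.
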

Since the class (LEJI) is closed under taking subgroups, Thompson's group $\textup{F}$ and the finitely presented nonamenable group $G_0$ also lie in this class. Whether this is the case for Thompson's group $F$ was a question recently posed to the author by Rostislav Grigorchuk in private correspondence.
Since $\textup{H}(\mathbf{R})$ and many of its subgroups are nonamenable \cite{monod:pp1}, we provide many examples of nonamenable groups in the class (LEJI). Note that groups which contain nonabelian free subgroups do not belong in this class (in particular, Thompson's groups $\textup{T},\textup{V}$), and neither do Tarski monsters or finitely generated infinite torsion groups such as Burnside groups or Golod-Shafareyvich groups.
The following question of Grigorchuk is also an interesting direction of research: Is there an (LEJI) group of intermediate growth? 

\section{The proof.}
All actions will be right actions.
Given an element $f\in \textup{PL}^+[0,1]$ and a subgroup $H\leq \textup{PL}^+[0,1]$ recall that the supports of $f$ and $H$ are defined as: $$Supp(f)=\{x\in [0,1]\mid x\cdot f\neq x\}\qquad Supp(H)=\bigcup\{Supp(h)\mid h\in H\}.$$ 
Given a group $G$, we denote by $G^{(n)}$ the $n$'th derived subgroup of $G$.

\begin{proof}[Proof of Theorem \ref{main}]
Given a finitely generated subgroup $H$ of $\textup{PL}^+[0,1]$, there exist finitely many points: $$0=x_0< x_1< ... <x_n=1$$ such that
for each $0\leq i\leq n-1$ either $H$ fixes each point in $[x_i,x_{i+1}]$ or each point in $(x_i,x_{i+1})$ is moved by some element in $H$. 
Note that every element $h$ of the derived subgroup satisfies that: $$\overline{Supp(h)}\subseteq \bigcup_{0\leq i\leq n-1}(x_i,x_{i+1})$$ 
Let $\phi:H\to J$ be a surjective homomorphism so that $J$ is just infinite. 
Since $J$ is finitely generated
so is $J^{(1)}$, since $J^{(1)}$ is a finite index subgroup in $J$ (unless $J^{(1)}$ is trivial, in which case $J$ is abelian and we are done).
Since each generator in a finite generating set for $J^{(1)}$ is a product of commutators of elements in $J$, there are elements $g_1,...,g_k\in H^{(1)}$ such that
$\phi(\langle g_1,...,g_k\rangle)=J^{(1)}$.

We shall construct a sequence of groups: $$H\geq G_0\geq G_1\geq G_2\geq \ldots \geq G_l$$ for some $0\leq l\leq n$ such that $\phi(G_i)=J^{(i+1)}$ for each $1\leq i\leq l$,
and so that $\phi(G_l)=J^{(l+1)}$ is non-trivial and abelian. Since each $J^{(i)}$, whenever nontrivial, is of finite index in $J$, we shall conclude that $J$ is virtually abelian.

We fix $G_0=\langle g_1,...,g_k\rangle$, and recall that $\phi(G_0)=J^{(1)}$.
Define $L_0=Supp(G_0)$ and note that: $$\overline{L_0}\subset \bigcup_{0\leq i\leq n-1}(x_i,x_{i+1})$$
Note that if $L_0$ is empty then $G_0$ is trivial and hence $J^{(1)}$ is trivial, implying that $J$ is abelian.
Assume that $L_0$ is nonempty.
Let $I_0=L_0\cap [x_j,x_{j+1}]$ for the smallest $j$ such that $L_0\cap [x_j,x_{j+1}]$ is nonempty. 
Since $H$ does not fix a point in $(x_j,x_{j+1})$, we can choose $k_0\in H$ such that $sup(I_0)\cdot k_0<inf(I_0)$ and hence $(I_0\cdot g_0)\cap I_0=\emptyset$.
Let $G_1=\langle [G_0^{k_0},G_0]\rangle$. Note that here $[G_0^{k_0},G_0]$ is the set $\{[\alpha,\beta]\mid\alpha\in G_0^{k_0},\beta\in G_0\}$ and $G_1$ is the group generated by it. It follows that $\phi(G_1)=\langle[\phi(G_0)^{\phi(k_0)},\phi(G_0)]\rangle $.
Note that since $\phi(G_0)=J^{(1)}$, it is characteristic in $J$, and hence $\phi(G_0)^{\phi(k_0)}=\phi(G_0)=J^{(1)}$.
It follows that $\phi(G_1)=[J^{(1)},J^{(1)}]=J^{(2)}$.
Moreover, note that $\overline{Supp(G_1)}\subset \bigcup_{j+1\leq i\leq n-1}(x_i,x_{i+1})$.

We continue inductively in this fashion, as follows. Having constructed $G_j$ such that $\phi(G_j)=J^{(j+1)}$, and if $G_j$ is nontrivial,
let $Supp(G_j)=L_j$ and $I_j=L_j\cap [x_p,x_{p+1}]$ for the smallest $p$ such that $L_j\cap [x_p,x_{p+1}]$ is nonempty.
We find $k_j\in H$ such that $(I_j\cdot k_j)\cap I_j=\emptyset$ and define $G_{j+1}=\langle [G_j^{k_j},G_j]\rangle$.
It follows that since $\phi(G_j)=J^{(j+1)}$ it is characteristic in $J$, we have that: $$\phi(G_{j})^{\phi(k_j)}=\phi(G_j)=J^{(j+1)}$$
It follows that: $$\phi(G_{j+1})=\langle [\phi(G_j)^{\phi(k_j)},\phi(G_j)]\rangle=\langle [\phi(G_j),\phi(G_j)]\rangle =J^{(j+2)}.$$
It is clear that this process ends with a group $G_l$ for some $l\leq n$ such that $[G_l^{k_l},G_l]$ is trivial.
And hence $\phi(G_l)=J^{(l+1)}$ is abelian.

The proof in the case of $\textup{H}(\mathbf{R})$ is similar to the proof above, except that the germs at fixed points of the subgroup $H$ are metabelian rather than abelian. So we start with the second derived subgroup $J^{(2)}$ of the just infinite quotient $J$, rather than the derived subgroup, and follow a similar argument as above.
\end{proof}

\bibliographystyle{amsalpha}
\bibliography{bib}

\providecommand{\bysame}{\leavevmode\hbox to3em{\hrulefill}\thinspace}
\providecommand{\MR}{\relax\ifhmode\unskip\space\fi MR }
\providecommand{\MRhref}[2]{%
  \href{http://www.ams.org/mathscinet-getitem?mr=#1}{#2}
}
\providecommand{\href}[2]{#2}
\begin{thebibliography}{Mon13}

\bibitem[Gri00]{GrigorchukJI}
R.~I. Grigorchuk, \emph{Just infinite branch groups}, pp.~121--179, Birkh{\"a}user Boston, Boston, MA, 2000.

\bibitem[LM16]{LodhaMoore}
Y.~Lodha and J.~T. Moore, \emph{A nonamenable finitely presented group of piecewise projective homeomorphisms}, Groups Geom. Dyn. \textbf{10} (2016), no.~1, 177--200. \MR{3460335}

\bibitem[Lod20]{Lodha}
Yash Lodha, \emph{A nonamenable type {$\rm F_\infty$} group of piecewise projective homeomorphisms}, J. Topol. \textbf{13} (2020), no.~4, 1767--1838. \MR{4186144}

\bibitem[Mon13]{monod:pp1}
N.~Monod, \emph{Groups of piecewise projective homeomorphisms}, Proc. Natl. Acad. Sci. USA \textbf{110} (2013), no.~12, 4524--4527. \MR{3047655}

\bibitem[Wil71]{Wilson}
J.~S. Wilson, \emph{Groups with every proper quotient finite}, Proc. Cambridge Philos. Soc. \textbf{69} (1971), 373--391. \MR{274575}

\end{thebibliography}

\end{document}